\documentclass[11pt]{amsart}
\usepackage{amsmath,amsthm,amssymb,mathrsfs}
\usepackage{microtype}
\usepackage{xspace}
\usepackage{cleveref,cite,url}
\usepackage{graphicx}
\usepackage[utf8]{inputenc}
\usepackage{enumitem}
\usepackage{dsfont} 

\theoremstyle{plain}
\newtheorem{theorem}{Theorem}
\newtheorem{lemma}{Lemma}
\newtheorem{proposition}{Proposition}

\theoremstyle{definition}

\newtheorem{example}{Example}

\theoremstyle{remark}
\newtheorem*{remark}{Remark}

\def\addQED{}
\def\proofmain{Proof of \Cref{thm:sufficient}}
\def\proofmainconic{Proof of \Cref{thm:sufficientconic}}
\def\prooflicq{Proof of \Cref{thm:licqgeneric}}

\newcommand{\ZZ}{\mathbb{Z}}

\newcommand{\NN}{\mathbb{N}}
\newcommand{\RR}{\mathbb{R}}

\renewcommand{\SS}{\mathbb{S}}
\newcommand{\id}{I}

\DeclareMathOperator{\rank}{rank}

\DeclareMathOperator{\image}{Im}
\DeclareMathOperator{\spann}{span}
\newcommand\mydots{\hbox to 0.9em{.\hss.\hss.}\kern.05em}

\begin{document}

\title[On the Burer-Monteiro method for general SDPs]{On the Burer-Monteiro method\\ for general semidefinite programs}

\author{Diego Cifuentes}
\address{Massachusetts Institute of Technology \\ Cambridge, MA, USA}
\email{diegcif@mit.edu}
\keywords{Semidefinite programming, Burer-Monteiro method, Low rank factorization, Nonconvex optimization, Spurious local minima}

\begin{abstract}
  Consider a semidefinite program (SDP) involving an $n\times\nobreak n$ positive semidefinite matrix~$X$.
The Burer-Monteiro method uses the substitution $X=Y Y^T$
to obtain a nonconvex optimization problem in terms of an $n\times p$ matrix~$Y$.
Boumal et al.\ showed that this nonconvex method provably solves equality-constrained SDPs with a generic cost matrix when $p \gtrsim\nobreak \sqrt{2m}$,
where $m$ is the number of constraints.
In this note we extend their result to arbitrary SDPs, possibly involving inequalities or multiple semidefinite constraints.
We derive similar guarantees for a fixed cost matrix and generic constraints.
We illustrate applications to matrix sensing and integer quadratic minimization.

\end{abstract}

\maketitle

\section{Introduction} \label{sec:introduction}

Consider a \emph{semidefinite program} (SDP) in $\SS^n$, the space of $n{\times} n$ symmetric matrices, with $m\!=\!m_1\!+\!m_2$ constraints
($m_1$~equalities and $m_2$ inequalities):
\begin{equation} \label{eq:sdp}
  \tag{\textit{SDP}}
  \begin{gathered}
    \min_{X\in \mathscr{X}}\; C\bullet X, \qquad
    \mathscr{X} \,:=\, \{X \in \SS_+^n:
  \mathcal{A}(X) - b \in \{0\}^{m_1}{\times} \RR_+^{m_2} \}
  \end{gathered}
\end{equation}
where $C\!\in\! \SS^n$, $b\!\in\!\RR^m$
and $\mathcal{A}:\SS^n\!\to\!\RR^m$,
$X \!\mapsto\! (A_1\!\bullet\! X,\mydots, A_m\!\bullet\! X)$ is a linear map.
We assume that $\mathscr{X}$ is nonempty and that the minimum is achieved.
Though interior point methods can solve~\eqref{eq:sdp} in polynomial time,
they often run into memory problems for large values of~$n$.
This has motivated a surge of newer, more scalable techniques;
see the recent survey~\cite{Majumdar2019}.
We study here the low rank factorization method, pioneered by Burer and Monteiro~\cite{Burer2003,Burer2005}.

The Burer-Monteiro method consists in writing $X= Y Y^T$ for some $Y\in \RR^{n\times p}$,
and solving the following nonconvex optimization problem:
\begin{equation} \label{eq:bm}
  \tag{\textit{BM}}
  \begin{gathered}
    \min_{Y\in \RR^{n\times p}}\;  C\bullet Y Y^T \quad\text{ such that }\quad Y Y^T \in \mathscr{X}.
  \end{gathered}
\end{equation}
Let $\tau(k):= \binom{k+1}{2}$ be the $k$-th triangular number.
Barvinok~\cite{Barvinok1995problems} and Pataki~\cite{Pataki1998} independently showed that \eqref{eq:sdp} has an optimal solution of rank $r$, with $\tau(r) \!\leq\! m$.
Consequently, problems~\eqref{eq:sdp} and~\eqref{eq:bm} are equivalent for any $p$ with $\tau(p) \!\geq\! m$.
But due to nonconvexity, local optimization methods may not always recover the global optimum of~\eqref{eq:bm}.
Nonetheless, the Burer-Monteiro performs very well in several applications, see e.g.,~\cite{Burer2003, Journee2010, Rosen2016}.

There has been much recent work in proving global guarantees for~\eqref{eq:bm}.
Most remarkably, Boumal et al.~\cite{Boumal2016,Boumal2018} showed that equality-constrained SDPs ($m_2{=}0$) have no spurious 2nd-order critical points when $\tau(p)\!>\!m$ under certain assumptions.
Concretely, they require that the cost matrix $C$ is generic and that the feasible set of \eqref{eq:bm} is sufficiently regular.
By \emph{generic} we mean that the result holds outside a set of measure zero.
Though other global guarantees for~\eqref{eq:bm} exist, e.g.~\cite{Ge2016,Park2017}, their setting is more restrictive.

In this note we generalize the result from Boumal et al.~\cite{Boumal2016,Boumal2018} to arbitrary SDPs,
possibly involving inequalities or multiple positive semidefinite (PSD) constraints.
For the inequality-constrained problem~\eqref{eq:sdp},
we show in \Cref{thm:main} that if $\tau(p)\!>\! m$ and the cost is generic, then any 2nd-order critical point of~\eqref{eq:bm} is globally optimal.
Similar guarantees might be derived even when the cost matrix is fixed, see \Cref{thm:genericA}.
We show applications to integer quadratic minimization and PSD matrix sensing.

Our proof of \Cref{thm:main} is simpler than the one in~\cite{Boumal2016,Boumal2018},
as it relies on nonlinear programming instead of Riemannian optimization.
This simplicity is reflected in the fact that \Cref{thm:main} does not require any regularity assumptions on the domain (constraint qualifications).
Nevertheless, regularity conditions might still be needed to prevent the existence of local minima that do not satisfy the 2nd-order criticality conditions.

We also consider SDPs involving multiple PSD variables,
and study the Burer-Monteiro method applied to a subset of these variables.
We prove in \Cref{thm:mainconic} that, for a generic cost, any 2nd-order critical point is globally optimal when $p$ satisfies a bound due to Pataki~\cite{Pataki1998}.
We present an application to symmetric matrix sensing
(the restricted isometry property is not needed).

The structure of this note is as follows.
\Cref{sec:criticality} reviews the notion of 2nd-order critical points in nonlinear programming.
\Cref{sec:inequality_constrained_sdp_s} analyzes the Burer-Monteiro method for the inequality-constrained problem~\eqref{eq:sdp}.
\Cref{sec:general_semidefinite_program} studies SDPs involving multiple PSD constraints.

\smallskip
\textbf{Related work.}
The guarantees from Boumal et al.\ have been further studied in~\cite{Waldspurger2018,Bhojanapalli2018,Pumir2018,Cifuentes2019poly},
but all these papers focus on the equality-constrained case.
The bound $\tau(p) > m$ was shown to be optimal up to lower order terms in~\cite{Waldspurger2018}.
Guarantees for approximate 2-critical points were derived in~\cite{Bhojanapalli2018,Pumir2018,Cifuentes2019poly}.
The first polynomial time bounds for the Burer-Monteiro method were recently proved in~\cite{Cifuentes2019poly}.
We hope that the techniques developed in this paper may lead to polynomial time guarantees for arbitrary SDPs.

\section{Criticality conditions}\label{sec:criticality}

We review the notion of critical points.
Consider the \emph{nonlinear program}
\begin{align} \label{eq:nlp} \tag{\it NLP}
\min_{y} \{ f(y) : h(y) \!\nobreak\in\nobreak\! \{0\}^{m_1}\!{\times}\RR_+^{m_2}\}.
\end{align}
Let
$L(y,\lambda) \!=\! f(y) \!-\! \lambda\!\cdot\! h(y) $
be the Lagrangian function.
Let $I(y) \!\subset\![m]$ be the indices of the \emph{active} constraints at~$y$,
i.e., the indices for which $h_i(y) \!=\! 0$.
The 1st-order and 2nd-order necessary optimality conditions are:
\begin{subequations}\label{eq:conditions0}
\begin{gather}\label{eq:conditionfirst0}
   y \text{ feasible},
  \quad
  \lambda \in \RR^{m_1}\!{\times}\RR_+^{m_2},
  \quad
  \lambda_i \!=\! 0 \text{ for } i\!\notin\! I( y),
  \quad
  \nabla_{y} L( y, \lambda) \!=\! 0,
  \\
  \label{eq:conditionsecond0}
  u^T \nabla^2_{yy} L( y, \lambda) \, u \geq 0,
  \quad\forall\, u \;\text{ such that }\;
  \nabla_y h_i(y) u \!=\! 0 \text { for } i\!\in\! I(y).
\end{gather}
\end{subequations}
A point $y$ is \emph{1st-order critical} for~\eqref{eq:nlp},
abbreviated \emph{1-critical},
if there exist multipliers~$\lambda$ satisfying~\eqref{eq:conditionfirst0}.
The point is \emph{2nd-order critical},
abbreviated \emph{2-critical},
if~\eqref{eq:conditionsecond0} also holds.
A critical point is \emph{spurious} if it is not the global minimum of~\eqref{eq:nlp}.

Given a {local minimum}~$y$ of~\eqref{eq:nlp},
it is known that $y$ satisfies~\eqref{eq:conditions0} under suitable \emph{regularity} assumptions.
Different regularity conditions, known as \emph{constraint qualifications}, have been proposed~\cite{Bazaraa2013}.
One of the simplest is:
\begin{align} \label{eq:licq} \tag{\it LICQ}
  \{\nabla h_i(y) : i \in I(y) \} \text{ are linearly independent}.
\end{align}
Various algorithms with provable convergence guarantees to 2-critical points are known,
see e.g., \cite{Andreani2010,Gill2017,Birgin2018} and the references therein.
These results rely either on \eqref{eq:licq} or a weaker constraint qualification.

More generally, consider the \emph{nonlinear conic program}
\begin{align} \label{eq:nlpconic} \tag{\it NLCP}
\min_{x,y} \{ f(x,y) : h(x,y) {=} 0, \, x{\in} \mathcal{K}\},
\end{align}
where $\mathcal{K}$ is a closed convex cone.
The Lagrangian function is
$ L(x,y,\lambda,s) = f(x,y) - \lambda {\cdot}\nobreak h(x,y) - s{\cdot} x$.
The following 1st-order conditions are necessary for optimality under suitable regularity conditions, see e.g.,~\cite[\S3.1]{Bonnans2013}:
\begin{subequations}\label{eq:conditionsconic0}
\begin{align}\label{eq:conditionconicfirst0}
  (x,y) \text{ feasible},
  \quad
  s\in \mathcal{K}^*,
  \quad
  \langle s, x\rangle = 0,
  \quad
  \nabla_{x,y} L(x, y, \lambda, s) = 0,
\end{align}
where $\mathcal{K}^*$ is the dual cone of~$\mathcal{K}$.
Though there exist 2nd-order conditions for conic programs,
it suffices for us to restrict the domain to pairs $(x,y)$ with a fixed value of $x$.
We get a nonlinear program in~$y$, with 2nd-order condition:
\begin{align}\label{eq:conditionconicsecond0}
  u^T \nabla^2_{yy} L( x,  y, \lambda,  s) \, u \geq 0,
  \quad\forall\, u \;\text{ such that }
  \nabla_y h( x,  y) \, u = 0.
\end{align}
\end{subequations}
A point $(x,y)$ is \emph{1-critical} for~\eqref{eq:nlpconic}
if it satisfies~\eqref{eq:conditionconicfirst0} for some $\lambda,s$.
The point is \emph{2-critical} if~\eqref{eq:conditionconicsecond0} also holds.

There are several algorithms for~\eqref{eq:nlpconic} for the case $\mathcal{K} \!=\! \SS_+^n$, see the survey paper~\cite{Yamashita2015}.
Symmetric cones (e.g., products of PSD cones) were studied in~\cite{Liu2008}.
These methods are provably convergent to 1-critical points.
In order to escape from points that do not satisfy~\eqref{eq:conditionconicsecond0}
we may rely on 2nd-order methods for the \eqref{eq:nlp} given by fixing the $x$~coordinate.

\section{Inequality constrained SDPs} \label{sec:inequality_constrained_sdp_s}

Consider problems~\eqref{eq:sdp} and~\eqref{eq:bm}.
For $X\!\in\! \mathscr{X}$, recall that the $i$-th constraint is {active} at~$X$ if $A_i \!\bullet\! X \!=\! b_i$.
Let $m' \!=\! m'(\mathscr{X})$ be the largest number of linearly independent constraints that can be simultaneously active.
For instance, if $m_2\!=\!0$ then $m' \!=\! \rank \mathcal{A}$.
We will show the following theorem.

\begin{theorem} \label{thm:main}
  Let $p$ such that $\tau(p) > m'$.
  For a generic~$C$,
  problem~\eqref{eq:bm} has no spurious 2-critical points.
  This means that any 2-critical point~$Y$ for~\eqref{eq:bm} is also globally optimal,
  and hence $ Y  Y^T$ is optimal for~\eqref{eq:sdp}.
\end{theorem}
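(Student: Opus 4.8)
The plan is to show that any 2-critical point $Y$ of \eqref{eq:bm} produces, almost for free, a Karush--Kuhn--Tucker certificate for the convex problem \eqref{eq:sdp}, so that global optimality follows from convex duality. Viewing \eqref{eq:bm} as an instance of \eqref{eq:nlp} in the variable $Y$, with objective $f(Y)=C\bullet YY^T=\operatorname{tr}(Y^TCY)$ and constraints $h_i(Y)=A_i\bullet YY^T-b_i$, I would first record that the Lagrangian collapses: writing $S:=C-\mathcal A^*(\lambda)$ with $\mathcal A^*(\lambda)=\sum_i\lambda_i A_i$, we have $L(Y,\lambda)=S\bullet YY^T+\lambda\cdot b$, so $\nabla_Y L=2SY$ and $\nabla^2_{YY}L[\dot Y,\dot Y]=2\,S\bullet\dot Y\dot Y^T$. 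The first-order conditions \eqref{eq:conditionfirst0} then read $SY=0$ together with $\lambda\in\RR^{m_1}\times\RR_+^{m_2}$ and $\lambda_i=0$ for inactive~$i$. Since $SY=0$ already forces $S\bullet YY^T=\operatorname{tr}(Y^TSY)=0$, the triple $(X,\lambda,S)$ with $X=YY^T$ satisfies primal feasibility, dual feasibility of the multipliers, and complementary slackness; the only missing ingredient of the SDP optimality certificate is $S\succeq 0$.

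To obtain $S\succeq 0$ I would exploit rank deficiency of $Y$. Suppose $\rank(Y)<p$, i.e. $Yz=0$ for some $z\neq 0$. For any $v\in\RR^n$ the direction $\dot Y=vz^T$ satisfies $\langle A_iY,\dot Y\rangle=(Yz)^TA_iv=0$ for \emph{every} $i$, so it lies in the feasible subspace of the second-order condition \eqref{eq:conditionsecond0}. Evaluating the Hessian gives $S\bullet\dot Y\dot Y^T=(v^TSv)\,\lVert z\rVert^2$, whence 2-criticality forces $v^TSv\ge 0$ for all $v$, that is $S\succeq 0$. Combined with the previous paragraph, $(X,\lambda,S)$ is then a KKT triple for the convex program \eqref{eq:sdp}, so $X=YY^T$ is globally optimal and $Y$ is a global minimizer of \eqref{eq:bm}. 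Thus the whole theorem reduces to a rank statement: for generic~$C$, every 2-critical point~$Y$ has $\rank(Y)<p$.

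The remaining and hardest step is this generic rank deficiency, and here the hypothesis $\tau(p)>m'$ must enter; I would prove it contrapositively, excluding $\rank(Y)=r$ whenever $\tau(r)>m'$. At such a point only $k\le m'$ linearly independent constraints are active, while the $\tau(r)$-dimensional space of symmetric perturbations $\dot X=YMY^T$ (with $M\in\SS^r$ supported on $\operatorname{range}Y$) is cut out by only $k\le m'<\tau(r)$ active constraints, so a nonzero feasible curvature direction always exists --- the Barvinok--Pataki signal that such a rank is unstable. To convert this into a measure-zero statement I would use the second-order condition directly: if $S\not\succeq 0$, let $W$ be its negative eigenspace; since $SY=0$ forces $W\perp\operatorname{range}(Y)$, directions $\dot Y$ with columns in $W$ make $S\bullet\dot Y\dot Y^T<0$, so 2-criticality demands that the $k$ functionals $\dot Y\mapsto\langle A_iY,\dot Y\rangle$ jointly annihilate no such direction, forcing $k\ge p\,\dim W$. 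These relations tie $Y$, $\lambda$, and the active set together, and I would feed them into a Sard/transversality count for the incidence variety of triples $(C,Y,\lambda)$ satisfying the first- and second-order conditions at rank~$r$, showing its projection to the $\tau(n)$-dimensional cost space has dimension strictly below $\tau(n)$ exactly when $\tau(r)>m'$. The main obstacle is precisely this count: the naive first-order tally is already exactly $\tau(n)$-dimensional and hence inconclusive, so the crux is to show that the extra equations extracted from the second-order condition drop the dimension below $\tau(n)$, uniformly over the finitely many possible active sets.
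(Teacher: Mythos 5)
Your first two paragraphs are correct and match the paper almost verbatim: the specialized criticality conditions you derive are \eqref{eq:conditions}, the duality certificate is \Cref{thm:firstpart}\textit{(i)}, and the $\dot Y = vz^T$ argument is \Cref{thm:firstpart}\textit{(ii)}. The genuine gap is in your final step, and it stems from a false premise: the claim that ``the naive first-order tally is already exactly $\tau(n)$-dimensional and hence inconclusive.'' It is conclusive. At a 1-critical point with $\rank Y = p$, the stationarity condition $S(\lambda)Y = 0$ alone forces $\rank S(\lambda) \leq n-p$, i.e.\ $S(\lambda) \in \SS^n_{n-p}$, a variety of dimension $\tau(n)-\tau(p)$, not $\tau(n)$ (see \cite[Prop.~2.1]{Helmke1995}, quoted before \eqref{eq:varietyrank}); and complementarity confines $\mathcal{A}^*(\lambda)$ to $\mathcal{L}_{I} = \spann\{A_i : i \in I(Y)\}$ for an achievable active set $I$, which by definition of $m'$ has dimension at most $m'$. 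Hence $C = S(\lambda) + \mathcal{A}^*(\lambda) \in \SS^n_{n-p} + \mathcal{L}$, a set of dimension at most $\tau(n)-\tau(p)+m' < \tau(n)$ when $\tau(p) > m'$, hence of measure zero. Your count presumably went wrong either by letting $S$ range over all of $\SS^n$ without registering the $\tau(p)$-dimensional drop imposed by $SY=0$ at full-rank $Y$, or by charging all $m$ multiplier dimensions rather than the at most $m'$ active ones. No Sard or transversality argument over an incidence variety of triples $(C,Y,\lambda)$ is needed, and no second-order information enters beyond the rank dichotomy you already established; Sard's theorem appears in the paper only for the separate regularity statement, \Cref{thm:licqgeneric}.

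Relatedly, the reduction you chose --- ``for generic $C$, every 2-critical point is rank deficient'' --- is a detour. The paper's logic is the contrapositive of your own paragraph two: a \emph{spurious} 2-critical point must satisfy $\rank Y = p$ (else \Cref{thm:firstpart}\textit{(ii)} makes it optimal), and then the first-order count above places $C$ in the measure-zero set $\SS^n_{n-p}+\mathcal{L}$ (this is exactly \Cref{thm:sufficient}). As it happens, the same count even shows that for generic $C$ no full-rank 1-critical points exist at all, so your stronger reduced statement is true --- but it follows from the one-line observation you missed, not from the unfinished machinery ($k \geq p\dim W$, negative eigenspaces, per-active-set transversality) sketched in your last paragraph, all of which can be deleted.
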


\begin{example}[Integer quadratic minimization]
  Consider the optimization problem
  $\min \{ f(x) : x \!\in\! \ZZ^n \}$
  where $f(x)$ is a convex quadratic function.
  Denoting $\tilde x \!:=\! (x,1) \!\in\! \ZZ^{n+1}$,
  we may write $f(x) \!=\! \tilde{x}^T C \tilde{x}$ for some $C \!\in\! \SS^{n+1}$.
  The following SDP relaxation for this problem was proposed in~\cite{Park2018}:
  \begin{align*}
    \min_{X} \; C \!\bullet\! X
    \quad\text{ s.t. }\quad
    X_{i,i} \!\geq\!  X_{i,n+1} \text{ for } i \!\in\! [n], \quad
    X_{n+1,n+1} \!=\! 1, \quad
    X\in\SS_+^{n+1}.
  \end{align*}
  By Theorem 1, for a generic cost function any 2-critical point of the Burer-Monteiro problem is globally optimal when $\tau(p) \!>\! n{+}1$.
\end{example}

By generic, we mean the following.
For fixed $\mathcal{A}$, $b$, the set of all cost matrices $C \!\in\! \SS^n$ for which~\eqref{eq:bm} has a spurious 2-critical point has measure zero.
We can provide an explicit characterization of this measure-zero set in~$\SS^n$.
This set is contained in the Minkowski sum of two special algebraic sets.
The first algebraic set is given by a rank constraint:
\begin{gather} \label{eq:varietyrank}
  \SS^n_{n-p} \; := \;
  \{ X: \rank X \leq n{-}p\} \;\subset\; \SS^n.
\end{gather}
It is known that $\dim \SS^n_{n-p} = \tau(n)\!-\!\tau(p)$, see e.g.,~\cite[Prop.2.1]{Helmke1995}.
The second algebraic set is a union of linear subspaces:
\begin{gather} \label{eq:varietyL}
  \mathcal{L} \; := \;
  \bigcup\nolimits_I
  \mathcal{L}_I
  \;\subset\; \SS^n,
  \quad\text{ with }\quad
  \mathcal{L}_I := \spann\{ A_i : i \in I \},
\end{gather}
where the union is over the possible subsets of constraints $I\subset[m]$ that can be simultaneously active.
Note that $\dim \mathcal{L} = m'$ by definition of~$m'$.

\begin{theorem}\label{thm:sufficient}
  If~\eqref{eq:bm} has a spurious 2-critical point then
  $C \in \SS^n_{n{-}p} + \mathcal{L}$.
\end{theorem}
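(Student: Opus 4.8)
The plan is to write \eqref{eq:bm} explicitly as an instance of \eqref{eq:nlp} and read off the criticality conditions \eqref{eq:conditions0}. With $f(Y) = C\bullet YY^T$ and $h_i(Y) = A_i\bullet YY^T - b_i$, the Lagrangian becomes $L(Y,\lambda) = S\bullet YY^T + \lambda\cdot b$, where I set $S := C - \sum_i \lambda_i A_i$ for the dual slack matrix. A direct computation gives $\nabla_Y L = 2SY$, while the Hessian quadratic form of $L$ in a direction $U\in\RR^{n\times p}$ equals $2\,(S\bullet UU^T)$. Thus for a 2-critical point $Y$ with multipliers $\lambda$ the conditions say: (i) $SY = 0$; (ii) $\lambda_i = 0$ for $i\notin I(Y)$, with $\lambda_i\ge 0$ on the inequality block; and (iii) $S\bullet UU^T \ge 0$ for every $U$ with $\langle A_iY, U\rangle = 0$ for all $i\in I(Y)$.

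The argument then splits on $r := \rank Y$. If $r = p$, then $SY=0$ forces $S$ to vanish on the $p$-dimensional column space of $Y$, so $\rank S \le n-p$, i.e.\ $S\in\SS^n_{n-p}$. Since $\lambda_i=0$ off $I(Y)$, the matrix $\sum_i\lambda_i A_i = \sum_{i\in I(Y)}\lambda_i A_i$ lies in $\mathcal{L}_{I(Y)}\subseteq\mathcal{L}$, and hence $C = S + \sum_i\lambda_i A_i \in \SS^n_{n-p}+\mathcal{L}$, as desired. So the entire content of the theorem is to rule out the case $r<p$ for a spurious point.

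The key step is therefore to show that a rank-deficient 2-critical point is automatically globally optimal. If $r<p$ the columns of $Y$ are dependent, so I can pick $v\ne 0$ with $Yv=0$ and test condition (iii) along the rank-one directions $U = wv^T$ for arbitrary $w\in\RR^n$. Because $Yv=0$, one checks $\langle A_iY, U\rangle = v^T Y^T A_i w = 0$ for every $i$, so all such $U$ are admissible; substituting gives $S\bullet UU^T = \lVert v\rVert^2\, w^T S w \ge 0$, whence $S\succeq 0$. Together with $SY=0$ (so that $S\bullet YY^T = 0$) and the sign/complementarity data in (ii), the triple $(X,\lambda,S)$ with $X=YY^T$ satisfies the KKT conditions of the convex program \eqref{eq:sdp}; by convexity $X$ is then optimal for \eqref{eq:sdp} and $Y$ optimal for \eqref{eq:bm}. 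Consequently a spurious point cannot have $r<p$, which forces $r=p$ and places $C$ in $\SS^n_{n-p}+\mathcal{L}$ by the previous paragraph.

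I expect the main obstacle to be the active-set bookkeeping in the rank-deficient case: verifying that the test directions $U=wv^T$ respect exactly the active constraints appearing in \eqref{eq:conditionsecond0}, and confirming that (i)--(iii) assemble into the genuine KKT system for \eqref{eq:sdp}, in particular the PSD complementarity $S\bullet X = 0$ and the correct sign of $\lambda$. The rank dichotomy and this reduction to KKT form the conceptual core; the Lagrangian derivative computations are routine.
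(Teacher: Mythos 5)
Your proposal is correct and takes essentially the same route as the paper: the rank dichotomy is exactly the paper's \Cref{thm:firstpart}, with the rank-deficient case handled by the same rank-one test directions $U = wv^T$ built from a kernel vector of $Y$ (yielding $S(\lambda)\succeq 0$ and global optimality via the SDP's KKT/complementarity conditions), and the full-rank case giving $S(\lambda)\in\SS^n_{n-p}$ and $\mathcal{A}^*(\lambda)\in\mathcal{L}_{I(Y)}\subseteq\mathcal{L}$, hence $C\in\SS^n_{n-p}+\mathcal{L}$. The bookkeeping you flag as a potential obstacle goes through exactly as you sketched and matches the paper's verification in \Cref{thm:firstpart}.
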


\Cref{thm:main} follows directly from \Cref{thm:sufficient}.
Indeed, if $\tau(p) \!>\! m'$ then
\begin{align}\label{eq:dimension}
  \dim (\SS^n_{n-r} \!{+} \mathcal{L})
  \,\leq\, (\tau(n){-}\tau(p)) + m'
  \,<\, \tau(n) = \dim \SS^n.
\end{align}
Therefore $\SS^n_{n-r} \!+\! \mathcal{L}$ is a proper algebraic set in~$\SS^n$,
and has measure zero.

We proceed to prove \Cref{thm:sufficient}.
We first derive the criticality conditions for~\eqref{eq:bm}.
This is a special instance of~\eqref{eq:nlp},
so we need to specialize~\eqref{eq:conditions0}.
We have $h(Y) \!=\! \mathcal{A}(Y Y^T) {-}b$
and $L(Y,\lambda) \!=\! S(\lambda) {\bullet} Y Y^T \!+\! b^T \lambda$, where
\begin{align*}
  S(\lambda) := C - \mathcal{A}^*(\lambda) \in \SS^n
  \,\text{ is the \emph{slack} matrix,}
\end{align*}
and $\mathcal{A}^*:\RR^m \!\to\! \SS^n$, $\lambda \!\mapsto\!\sum_i \lambda_i A_i$ is the adjoint of~$\mathcal{A}$.
The 1st-order and 2nd-order criticality conditions are:
\begin{subequations}\label{eq:conditions}
\begin{gather}
  \label{eq:conditionfirst}
  Y  Y^T \in \mathscr{X},
  \quad
  \lambda \in \RR^{m_1}\!{\times}\RR_+^{m_2},
  \quad
  \lambda_i \!=\! 0 \text{ for } i \!\notin\! I( Y),
  \quad
  S(\lambda)  Y = 0,
  \\
  \label{eq:conditionsecond}
  S(\lambda)\bullet U U^T \geq 0,
  \quad\forall\, U\!\in\! \RR^{n\times p} \text{ such that }
  A_i \bullet U  Y^T \!=0 \text { for } i\!\in\! I(Y).
\end{gather}
\end{subequations}

The following lemma establishes sufficient conditions for a critical point to be global optimal.
The lemma is known, see~\cite{Burer2005,Journee2010,Boumal2018},
but our assumptions are slightly different since we allow inequalities.

\begin{lemma} \label{thm:firstpart}
  Either of the following conditions imply global optimality:
  \begin{enumerate}[label=(\roman*)]
    \item
      $Y$ is 1-critical and the multiplier $\lambda$ satisfies $S(\lambda) \in \SS_+^n$,
    \item
      or $Y$ is 2-critical and $Y$ is column rank deficient.
  \end{enumerate}
\end{lemma}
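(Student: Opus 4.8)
The plan is to prove part~(i) directly by SDP weak duality, and then reduce part~(ii) to part~(i) by using the column rank deficiency to certify that the slack matrix is positive semidefinite.

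For part~(i) I would set $X := YY^T$ and first show that the primal objective attains the dual value $b^T\lambda$. Two facts from~\eqref{eq:conditionfirst} drive this: the stationarity $S(\lambda)Y = 0$ makes $S(\lambda)\bullet YY^T = \operatorname{tr}(Y^T S(\lambda) Y) = 0$, and complementary slackness (i.e.\ $\lambda_i = 0$ for $i\notin I(Y)$ together with $A_i \bullet X = b_i$ for the active indices) makes $\mathcal{A}^*(\lambda)\bullet X = b^T\lambda$; adding these gives $C\bullet X = b^T\lambda$. For an arbitrary feasible $X'\in\mathscr{X}$ I would then expand $C\bullet X' = S(\lambda)\bullet X' + \sum_i \lambda_i(A_i\bullet X' - b_i) + b^T\lambda$ and verify that the first two summands are nonnegative: the first since $S(\lambda)$ and $X'$ are both PSD, the equality terms since they vanish, and each inequality term since it pairs a nonnegative multiplier $\lambda_i$ with a nonnegative slack $A_i\bullet X' - b_i$. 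This gives $C\bullet X' \geq b^T\lambda = C\bullet X$, which is global optimality.

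For part~(ii) the crux is that column rank deficiency of $Y$ yields a unit vector $c\in\RR^p$ with $Yc = 0$, and feeding the rank-one directions $U = xc^T$ into the 2nd-order condition~\eqref{eq:conditionsecond} forces $S(\lambda)\succeq 0$. I would check that these $U$ are admissible---since $UY^T = x(Yc)^T = 0$, every active equation $A_i\bullet UY^T = 0$ holds automatically---and that $UU^T = xx^T$, so that~\eqref{eq:conditionsecond} reads $x^T S(\lambda)x \geq 0$. Letting $x$ range over $\RR^n$ yields $S(\lambda)\in\SS_+^n$, and part~(i) then closes the argument.

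I do not expect a deep obstacle, since both halves reduce to SDP duality; the only delicate point is the sign bookkeeping for the inequality constraints, where I must keep the orientation consistent so that $\lambda_i \geq 0$ multiplies the correct nonnegative slack $A_i\bullet X' - b_i \geq 0$. The conceptually appealing step is the reduction in~(ii): rank deficiency enlarges the admissible cone in~\eqref{eq:conditionsecond} so that it contains all perturbations $xc^T$, which is exactly what upgrades a one-sided curvature bound into full positive semidefiniteness of the slack matrix.
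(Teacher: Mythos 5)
Your proposal is correct and takes essentially the same route as the paper: for (i) the paper also certifies optimality of the pair $(YY^T,\lambda)$ via the conic dual by checking primal feasibility, dual feasibility ($S(\lambda)\in\SS_+^n$, $\lambda\in\RR^{m_1}{\times}\RR_+^{m_2}$) and complementary slackness, and you merely unpack that certificate into the explicit weak-duality expansion of $C\bullet X'$, with the sign bookkeeping for the inequality constraints handled correctly. For (ii) your argument is the paper's verbatim: a vector $c$ (the paper's $z$) in the right kernel of $Y$ makes every $U=xc^T$ admissible in~\eqref{eq:conditionsecond} since $UY^T=0$, which forces $x^TS(\lambda)x\geq 0$ for all $x$ and reduces to part (i).
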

\begin{proof}
  \textit{(i)}
  The conic dual of~\eqref{eq:sdp} is
  $
  \,\max_{\lambda}\{ b^T \lambda :
    S(\lambda) \!\in\! \SS_+^n,\,
  \lambda\!\in\! \RR^{m_1}\!{\times} \RR_+^{m_2} \}.
  $
  Let $(Y,\lambda)$ satisfy \eqref{eq:conditionfirst}, and let $X := Y Y^T$.
  We will show that the primal/dual pair $(X, \lambda)$ is optimal for the SDP.
  It suffices to verify three conditions:
  $X$ is primal feasible,
  $\lambda$ is dual feasible,
  and complementary slackness holds
  (i.e., $\lambda_i{=}0$ for $i{\notin} I( X)$ and $S(\lambda)  X {=} 0$).
  Primal feasibility and complementary slackness follow from~\eqref{eq:conditionfirst},
  while dual feasibility corresponds to $S(\lambda) \!\in\! \SS_+^n$.

  \textit{(ii)}
  Let $(Y,\lambda)$ satisfy \eqref{eq:conditions}.
  By the above item, it suffices to show that $S(\lambda) \!\in\! \SS_+^n$.
  Let $x\!\in\! \RR^n$, and let us see that $x^T S(\lambda) x \!\geq\! 0$.
  Since $Y$ is rank deficient,
  there is a nonzero vector $z\!\in\! \RR^p$ such that $Y z \!=\! 0$.
  The matrix $U \!:=\! x z^T$ satisfies $U Y^T \!\!=\! 0$,
  so $S(\lambda) \!\bullet\! U U^T \!\!\geq\! 0$ by~\eqref{eq:conditionsecond}.
  Since $S(\lambda)\!\bullet\! U U^T \!=\! \|z\|^2 (x^T S(\lambda) x)$,
  then $x^T S(\lambda) x \geq 0$.
  \addQED
\end{proof}

We are ready to prove \Cref{thm:sufficient} (which implies \Cref{thm:main}).

\begin{proof}[\proofmain]
  Let $(Y,\lambda)$ a spurious point satisfying~\eqref{eq:conditions}.
  \Cref{thm:firstpart}\textit{(ii)} gives that $\rank Y\!=\!p$.
  By \eqref{eq:conditionfirst} we have $S(\lambda)  Y \!=\! 0$, which implies $S(\lambda)\!\in\! \SS^n_{n-p}$,
  and also $\lambda_i{=}0$ for $i\!\notin\!I(Y)$.
  Thus
  $C = S(\lambda) \!+\! \mathcal{A}^*(\lambda) \in \SS^n_{n-p} \!+ \mathcal{L}$.
  \addQED
\end{proof}

To finish this section, we observe that \Cref{thm:sufficient} can be used even if the cost matrix~$C$ is not generic.
For instance, the next theorem assumes that both $b,C$ are fixed and $\mathcal{A}$ is generic (i.e., $A_1,\dots,A_m$ are generic).

\begin{theorem} \label{thm:genericA}
  Let $p$ such that $\tau(p) \!>\! m$ and $\rank C \!>\! n{-}p$.
  For a generic~$\mathcal{A}$,
  problem~\eqref{eq:bm} has no spurious 2-critical points.
\end{theorem}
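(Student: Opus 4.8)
The plan is to read the obstruction straight off \Cref{thm:sufficient}: if \eqref{eq:bm} has a spurious 2-critical point, then $C \in \SS^n_{n-p} + \mathcal{L}$. The difference from \Cref{thm:main} is that now $C$ and $b$ are fixed while $\mathcal{A}$ varies, so rather than bounding the dimension of $\SS^n_{n-p} + \mathcal{L}$ inside $\SS^n$ and invoking \eqref{eq:dimension}, I would view $\SS^n_{n-p} + \mathcal{L}$ as a subset of $\SS^n$ that moves with $\mathcal{A}$ and show that the fixed matrix $C$ avoids it for generic $\mathcal{A}$. The first simplification is to discard the union structure of $\mathcal{L}$: every subspace $\mathcal{L}_I = \spann\{A_i : i \in I\}$ is contained in $V := \spann\{A_1,\dots,A_m\}$, so $\SS^n_{n-p} + \mathcal{L} \subseteq \SS^n_{n-p} + V$, and it suffices to prove that the set of $\mathcal{A}$ with $C \in \SS^n_{n-p} + V$ has measure zero. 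By definition, $C \in \SS^n_{n-p} + V$ is equivalent to the existence of $\mu \in \RR^m$ with $\rank\big(C - \mathcal{A}^*(\mu)\big) \le n-p$.

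The key step is a dimension count for the bad set
\[
  \mathcal{Z} \,:=\, \big\{\mathcal{A} \in (\SS^n)^m : \exists\, \mu \in \RR^m,\ \rank\big(C - \mathcal{A}^*(\mu)\big) \le n-p \big\}
\]
inside the ambient parameter space $(\SS^n)^m$, which has dimension $m\,\tau(n)$. The hypothesis $\rank C > n-p$ rules out the witness $\mu = 0$, since that would force $\rank C \le n-p$; hence on $\mathcal{Z}$ some coordinate $\mu_{i_0} \ne 0$. Setting $W := C - \mathcal{A}^*(\mu) \in \SS^n_{n-p}$ and solving for that matrix gives
\[
  A_{i_0} \,=\, \tfrac{1}{\mu_{i_0}}\Big(C - W - \textstyle\sum_{i \ne i_0} \mu_i A_i\Big),
\]
so the part of $\mathcal{Z}$ with $\mu_{i_0} \ne 0$ is covered by the image of a rational map whose domain is parametrized by $W \in \SS^n_{n-p}$, by $\mu \in \RR^m$, and by the remaining matrices $(A_i)_{i \ne i_0}$. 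Its dimension is therefore at most
\[
  (\tau(n) - \tau(p)) + m + (m-1)\tau(n) \,=\, m\,\tau(n) - (\tau(p) - m).
\]

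Since $\tau(p) > m$, this is strictly less than $m\,\tau(n)$, so each such piece—and the finite union over $i_0 \in [m]$—has measure zero; thus $C \notin \SS^n_{n-p} + \mathcal{L}$ for generic $\mathcal{A}$, and \Cref{thm:sufficient} then forbids spurious 2-critical points. I expect the point needing the most care to be the measure-zero step: one must justify that the image of the rational parametrization genuinely has dimension bounded by that of its domain (using that $\SS^n_{n-p}$ is an algebraic variety of the stated dimension $\tau(n)-\tau(p)$ and that the map is smooth off the locus $\mu_{i_0} = 0$), and verify that this parametrization really covers all of $\mathcal{Z}$. The role of the two hypotheses is transparent in this count: $\rank C > n-p$ eliminates the degenerate witness $\mu = 0$, while $\tau(p) > m$ supplies the positive codimension $\tau(p) - m$.
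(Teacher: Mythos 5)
Your proposal is correct, and it reaches the conclusion by a genuinely different implementation of the genericity step than the paper. Both arguments share the same skeleton: invoke \Cref{thm:sufficient} to reduce everything to showing $C \notin \SS^n_{n-p} + \mathcal{L}$ for generic $\mathcal{A}$, with $\rank C > n{-}p$ used precisely to exclude the degenerate case where no constraint matrix is needed. But the paper stays in the ambient space $\SS^n$: it fixes each active set $I$, observes that $\mathcal{L}_I$ is a generic subspace of dimension $|I| \leq m < \tau(p)$, and appeals to the standard generic-position fact that a fixed point off the variety $\SS^n_{n-p}$ (this is where $\rank C > n{-}p$ enters) remains off $\SS^n_{n-p} + \mathcal{L}_I$ for generic $\mathcal{L}_I$, then takes the finite union over $I$. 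You instead coarsen $\mathcal{L}$ to the full span $\image \mathcal{A}^* \supseteq \mathcal{L}$ --- which costs nothing since the hypothesis is $\tau(p) > m$ rather than $\tau(p) > m'$ --- and move the dimension count to the parameter space $(\SS^n)^m$ of constraint tuples, covering the bad locus $\mathcal{Z}$ by $m$ rational parametrizations obtained by solving for $A_{i_0}$ when $\mu_{i_0} \neq 0$; the count $(\tau(n)-\tau(p)) + m + (m{-}1)\tau(n) = m\,\tau(n) - (\tau(p)-m)$ is right, and the measure-zero conclusion follows once one stratifies $\SS^n_{n-p}$ into finitely many smooth manifolds of dimension at most $\tau(n)-\tau(p)$ (as you note, this and the coverage of $\mathcal{Z}$ are the points requiring care; both go through, since any witness $\mu$ must be nonzero and hence every $\mathcal{A} \in \mathcal{Z}$ lies in the image of one of the $m$ maps). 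What each approach buys: the paper's version is shorter but leaves the generic-position fact implicit, whereas yours is more self-contained and elementary --- it proves the avoidance statement from first principles via a Lipschitz/semialgebraic image bound --- at the mild cost of working in the larger space $(\SS^n)^m$ and discarding the finer union structure of $\mathcal{L}$ that the paper's per-$I$ argument retains.
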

\begin{proof}
  By \Cref{thm:sufficient}, it suffices to see that $C \notin \SS^n_{n-p} \!+\! \mathcal{L}$.
  Fix $I\!\subset\! [m]$, and let $\mathcal{L}_I\!\subset\nobreak\! \SS^n$ as in~\eqref{eq:varietyL}.
  Note that $\mathcal{L}_I$ is generic among the subspaces of dimension~$|I|$,
  as it depends on the generic matrices~$A_i$.
  Recall that $\dim( \SS^n_{n-p} {+} \mathcal{L}_I) \!<\! \dim\SS^n$ by~\eqref{eq:dimension}.
  Since $C\!\notin\! \SS^n_{n-p}$, then $C \notin \SS^n_{n-p} \!+\! \mathcal{L}_I$ for a generic~$\mathcal{L}_I$.
  The result follows from $\mathcal{L} = \bigcup_I \mathcal{L}_I$.
  \addQED
\end{proof}

An additional advantage of having generic constraints is that regularity is always satisfied.
Therefore any local minimum of~\eqref{eq:bm} is also 2-critical,
and hence is subject to \Cref{thm:genericA}.
The next proposition is shown in \Cref{sec:regularity}.

\begin{proposition} \label{thm:licqgeneric}
  Assume that the entries of $b$ are nonzero.
  For a generic $\mathcal{A}$,
  any feasible point of \eqref{eq:bm} satisfies \eqref{eq:licq}.
\end{proposition}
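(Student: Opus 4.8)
The plan is to exhibit the set of ``bad'' $\mathcal A$---those for which some feasible $Y$ violates \eqref{eq:licq}---as a finite union of measure-zero sets. The gradient of the $i$-th constraint $Y\mapsto A_i\bullet YY^T-b_i$ is $2A_iY$, so \eqref{eq:licq} fails at a feasible $Y$ exactly when $\{A_iY:i\in I(Y)\}$ is linearly dependent. Fixing a nonempty $I\subseteq[m]$ with $|I|=k$, it then suffices to bound, and finally union over the finitely many $I$, the set of $(A_i)_{i\in I}$ for which there exist $Y\in\RR^{n\times p}$ and $c\in\RR^I$ with $\|c\|=1$ such that
\[
  A_i\bullet YY^T=b_i\ (i\in I),\qquad \sum_{i\in I}c_iA_iY=0;
\]
the constraints outside $I$ are irrelevant and only shrink the bad set.

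The decisive observation, and the only place the hypothesis on $b$ is used, comes from pairing the dependency with $Y$: since $\langle A_iY,Y\rangle=A_i\bullet YY^T=b_i$, the relation $\sum_ic_iA_iY=0$ forces $\sum_{i\in I}c_ib_i=0$. Hence the dependency coefficients lie in the hyperplane $c\perp(b_i)_{i\in I}$, and as $(b_i)_{i\in I}\neq0$ this lowers the dimension of the admissible $c$ by one---precisely the slack that will make the final count strict. (For $k=1$ it is already contradictory, recovering that a single active gradient $A_iY\neq0$ cannot be dependent.)

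To quotient out the gauge freedom $Y\mapsto YO$ of the factorization I would reparametrize by $X:=YY^T$. Writing $S(c):=\sum_{i\in I}c_iA_i$ and using that $X$ and $Y$ share the same column space, one has $\sum_ic_iA_iY=0\iff S(c)X=0$. I would then stratify by $r:=\rank X\in\{1,\dots,p\}$ (note $X\neq0$ since some $b_i\neq0$) and study the incidence set of triples $((A_i)_{i\in I},X,c)$ with $X\in\SS_+^n$ of rank $r$, $\|c\|=1$, $A_i\bullet X=b_i$ and $S(c)X=0$. The heart of the proof is a fiber computation: fix $(X,c)$ with $c\perp(b_i)_{i\in I}$, choose coordinates so that the column space of $X$ is spanned by the first $r$ vectors, and block-decompose
\[
  A_i=\begin{pmatrix}P_i & Q_i\\ Q_i^{T} & R_i\end{pmatrix},\quad P_i\in\SS^{r},\ Q_i\in\RR^{r\times(n-r)},\ R_i\in\SS^{n-r},\qquad X=\begin{pmatrix}X_1 & 0\\ 0 & 0\end{pmatrix}
\]
with $X_1\in\SS^r$ invertible. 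Then $A_i\bullet X=P_i\bullet X_1$ involves only $P_i$, while $S(c)X=0$ reads $\sum_ic_iP_i=0$ and $\sum_ic_iQ_i=0$ and leaves every $R_i$ free. A direct rank count shows the resulting homogeneous system on $(A_i)_{i\in I}$ has codimension $\tau(r)+r(n-r)+(k-1)=(\tau(n)-\tau(n-r))+(k-1)$, one less than the naive $k+(\tau(n)-\tau(n-r))$; the single dependency is $\sum_ic_i(A_i\bullet X)=S(c)\bullet X=0$, which is also the consistency condition and holds exactly because $c\perp(b_i)_{i\in I}$.

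Tallying dimensions over this stratum---$\tau(n)-\tau(n-r)$ for the rank-$r$ PSD matrix $X$, $k-2$ for the admissible $c$, and $k\tau(n)-(\tau(n)-\tau(n-r))-(k-1)$ for the affine fiber of $(A_i)_{i\in I}$---gives total dimension $k\tau(n)-1$. Since projecting to $(A_i)_{i\in I}$ cannot raise dimension, the bad set for this $(I,r)$ has dimension at most $k\tau(n)-1<k\tau(n)$; appending the free blocks $A_j$, $j\notin I$, it is a measure-zero subset of $(\SS^n)^m$, and the union over the finitely many pairs $(I,r)$ is still measure zero. The main obstacle is the block rank count---specifically confirming that the two systems share exactly one dependency, so that the nonvanishing of $b$ buys the decisive $-1$; the remainder is bookkeeping.
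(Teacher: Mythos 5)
Your proof is correct, but it takes a genuinely different route from the paper's. The paper's proof is a short application of Sard's theorem (\Cref{thm:sard}): it restricts to the parametrization $A_i = \alpha_i \bar{A}_i$ with the $\bar{A}_i$ fixed and the scalars $\alpha_i$ generic, observes that $v = (b_i/\alpha_i)_{i\in I}$ is then a generic value of the fixed map $f_I(Y) = (\bar{A}_i \bullet Y Y^T)_{i\in I}$ --- the hypothesis $b_i \neq 0$ enters only here --- and concludes that $\nabla f_I$ has full rank on all of $f_I^{-1}(v) = \mathcal{M}_I$. You instead carry out an explicit incidence-variety dimension count directly in the full space $(\SS^n)^m$ of constraint tuples: you pass to $X = Y Y^T$ (the equivalence $\sum_i c_i A_i Y = 0 \iff S(c)X = 0$ is correct, e.g.\ since $S(c)YY^TS(c)^T=0$ forces $S(c)Y=0$), stratify by $r = \rank X$, and your fiber computation is sound --- the combined system $\sum_i c_i P_i = 0$, $\sum_i c_i Q_i = 0$, $P_i \bullet X_1 = b_i$ has exactly a one-dimensional space of dependencies, spanned by $(M,d) = (-X_1, c)$, because $X_1$ is invertible and $c \neq 0$, giving fiber codimension $(\tau(n)-\tau(n-r)) + (k-1)$. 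The identity $\sum_i c_i b_i = S(c)\bullet X = 0$ is where the nonvanishing of $b$ buys your decisive unit of codimension, playing exactly the role that genericity of $v$ plays in the paper. The tally $(\tau(n)-\tau(n-r)) + (k-2) + k\tau(n) - (\tau(n)-\tau(n-r)) - (k-1) = k\tau(n)-1$ is right, and since all the sets involved are semialgebraic, the projected bad set indeed has measure zero. As for trade-offs: the paper's Sard argument is far shorter and proves a ``fixed directions, generic scalings'' statement (which yields the stated full-measure claim in $(\SS^n)^m$ via a Fubini argument the paper leaves implicit), whereas your argument establishes the measure-zero statement in $(\SS^n)^m$ with no detour through a restricted parametrization and delivers extra quantitative content --- an explicit stratified description of the bad set with a codimension bound --- at the cost of the rank-and-dependency bookkeeping you correctly identify as the heart of the matter.
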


\begin{example}[Matrix sensing] \label{exmp:sensing}
  Given a linear map $\mathcal{A}:\SS^n \!\to\! \RR^m$ and a vector $b \!\in\! \RR^m$,
  consider finding a low rank matrix $X \!\in\! \SS^n$ such that $\mathcal{A}(X) \!=\! b$.
  A~standard technique to promote low rank is to minimize the nuclear norm:
  \begin{align}\label{eq:sensing}
    \min_{X\in\SS^n} \; \|X\|_*
    \quad\text{ such that }\quad \mathcal{A}(X)=b.
  \end{align}
  If we further assume that $X$ that is PSD, the cost function is $\id_n \bullet X$.
  By \Cref{thm:genericA}, if $\mathcal{A}$ is generic and $\tau(p)\!>\!m$,
  then any local minimum of~\eqref{eq:bm} is globally optimal.
  The PSD assumption will be relaxed in the next section.
\end{example}
\begin{remark}
  Different guarantees about the Burer-Monteiro method for matrix sensing were obtained in~\cite{Park2017}, relying on the restricted isometry property.
\end{remark}

\section{General SDPs} \label{sec:general_semidefinite_program}

Let $\textbf{n} \!:=\! (n_1,\dots,n_\ell)\in \NN^\ell$ and $d\in \NN$.
We consider an SDP involving PSD matrices of sizes $n_1,\dots,n_\ell$ and a free variable of dimension~$d$.
Let the Euclidean space
$\SS^{\bf n} := \SS^{n_1}{\times} \cdots {\times} \SS^{n_\ell}$
and the convex cone
$\SS^{\bf n}_+ := \SS_+^{n_1}{\times} \cdots {\times} \SS_+^{n_\ell}$.
Given $C \in \SS^{\bf n}{\times}\RR^d$, $b \in \RR^m$, and a linear map $\mathcal{A}:\SS^{\bf n}{\times}\RR^d\!\to\! \RR^{m}$,
consider:
\begin{equation} \label{eq:sdpconic}
  \tag{${\mathit{SDP}}_{\!\bf n}$}
  \begin{gathered}
    \min_{X\in \mathscr{X}}\; \langle C,  X\rangle,
  \qquad
  \mathscr{X} := \{X\!\in \SS^{\bf n}_+\!\times\!\RR^d \,:\,
  \mathcal{A}(X) \!=\! b \},
  \end{gathered}
\end{equation}
where $X := (X_1,\dots,X_\ell,x)$ with $X_j\in \SS^{n_j}$, $x\in\RR^d$.
As before, we assume that $\mathscr{X}$ is nonempty and that the minimum is achieved.

We apply the Burer-Monteiro method to the first $k$ matrices.
Let $Y := (Y_1,\mydots, Y_k)$, with $Y_j \!\in\! \RR^{n_j\times p_j}$, and let $q(Y):= (Y_1 Y_1^T, \mydots, Y_k Y_k^T)$.
We denote
\begin{align*}
\underline{\bf n} \!:=\! (n_1,\mydots, n_k), \quad
\overline{\bf n} \!:=\! (n_{k+1},\mydots,n_\ell), \quad
\overline{X} := (X_{k+1},\mydots,X_\ell).
\end{align*}
In particular, $\SS^{\bf n} = \SS^{\underline{\bf n}} \times \SS^{\overline{\bf n}}$.
The Burer-Monteiro problem is:
\begin{equation} \label{eq:bmconic}
  \tag{${\mathit{BM}}_{\!\bf n}$}
  \begin{gathered}
    \min_{Y,\,\overline{X},\, x}\;\;
    \langle\, C\,,\, (q(Y), \overline{X}, x)\,\rangle
    \quad\text{ such that }\quad
    (q(Y), \overline{X}, x) \,\in\, \mathscr{X}.
  \end{gathered}
\end{equation}

Pataki~\cite{Pataki1998} showed that~\eqref{eq:sdpconic} always has an optimal solution such that $\sum_{j=1}^\ell \tau(r_j) \!\leq\! m \!-\! d$, where $r_j \!:=\! \rank X_j$.
We can ensure that there is a solution with $r_j \!\leq\! p_j$ for all $j \!\in\! [k]$
if either $p_j \!\geq\! n_j$ or $\tau(p_j) \!\geq\! m'$, with
\begin{align*}
  m' \;:=\; \max_{r_{k+1},\dots, r_\ell} \quad  m - d - \tau(r_{k+1}) - \tau(r_{k+2}) - \,\cdots\, -\tau(r_\ell),
\end{align*}
where the maximum is over the possible ranks $r_{k+1},\dots,r_\ell$.
Hence, problems~\eqref{eq:sdpconic} and~\eqref{eq:bmconic} agree when
$\tau(p_j) \!\geq\! \min\{m', \tau(n_j)\}$ for $j \!\in\! [k]$.

\begin{theorem} \label{thm:mainconic}
  Assume that $\tau(p_j) \!>\! \min\{m',\tau(n_j)\}$ for $j \!\in\! [k]$.
  For a generic~$C$, problem~\eqref{eq:bmconic} has no spurious 2-critical points.
\end{theorem}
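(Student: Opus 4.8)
The plan is to mimic the strategy used for \Cref{thm:sufficient} and \Cref{thm:main}, generalizing it to the conic setting \eqref{eq:nlpconic}. The overall structure has three parts: derive the criticality conditions for \eqref{eq:bmconic}, establish a global-optimality criterion analogous to \Cref{thm:firstpart}, and then argue that a spurious 2-critical point forces the cost $C$ into a proper algebraic subset of $\SS^{\bf n}\times\RR^d$.

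First I would specialize the conic conditions \eqref{eq:conditionsconic0} to \eqref{eq:bmconic}. Here the free variable of \eqref{eq:nlpconic} plays the role of $(Y,x)$ and the conic variable $\overline{X}$ lives in $\SS^{\overline{\bf n}}_+$. Writing the slack as $S(\lambda) = C - \mathcal{A}^*(\lambda)$, the 1st-order conditions should give, for each factored block $j\le k$, a relation of the form $S_j(\lambda)\, Y_j = 0$, together with the dual-feasibility requirement $S_{\overline{\bf n}}(\lambda) \in \SS^{\overline{\bf n}}_+$ on the unfactored blocks and complementary slackness on the conic part. The 2nd-order condition \eqref{eq:conditionconicsecond0}, restricted as the paper suggests to perturbations of $Y$ with $\overline{X}$ and $x$ held fixed, yields the block analogue of \eqref{eq:conditionsecond}: $S_j(\lambda)\bullet U_j U_j^T \ge 0$ for admissible directions $U_j$.

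Next I would prove the conic version of \Cref{thm:firstpart}. The key point is that a point is globally optimal as soon as the full slack $S(\lambda)$ is dual feasible, i.e. $S_j(\lambda)\in\SS^{n_j}_+$ for every PSD block and the free-variable part vanishes; this is just SDP strong duality with complementary slackness, exactly as in part \emph{(i)}. For the factored blocks, the rank-deficiency argument of part \emph{(ii)} applies blockwise: if some $Y_j$ has a nontrivial kernel vector $z$, then testing \eqref{eq:conditionconicsecond0} with $U_j = x z^T$ shows $S_j(\lambda)\succeq 0$. The obstacle, and the reason the hypothesis involves $\min\{m',\tau(n_j)\}$, is that a block may fail to be rank deficient when $p_j \ge n_j$; in that case $Y_j Y_j^T$ can have full rank $n_j$, so $S_j(\lambda) Y_j = 0$ forces $S_j(\lambda) = 0$ directly, which is trivially PSD. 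Thus for each $j$ one concludes $S_j(\lambda)\in\SS^{n_j}_+$ either by rank deficiency (when $\tau(p_j) > m'$) or by $S_j(\lambda)=0$ (when $p_j\ge n_j$), covering both branches of the $\min$.

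Finally I would run the dimension-counting argument. Combining the above, a spurious 2-critical point cannot have all blocks with $S_j(\lambda)\succeq 0$, so at least one factored block $j$ must be genuinely rank deficient, giving $S_j(\lambda)\in\SS^{n_j}_{n_j-p_j}$, while the unfactored slack lies in $\SS^{\overline{\bf n}}_+$ and $\lambda$ is supported on the active constraints. This places $C = S(\lambda) + \mathcal{A}^*(\lambda)$ in a Minkowski sum of a low-rank determinantal variety (of dimension $\sum_j(\tau(n_j)-\tau(p_j))$ on the factored part) and a subspace of dimension at most $m$, and the hypothesis $\tau(p_j)>\min\{m',\tau(n_j)\}$ is exactly what makes this sum a proper algebraic set, hence of measure zero in $\SS^{\bf n}\times\RR^d$. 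I expect the main obstacle to be bookkeeping the $\min$ in the hypothesis cleanly in the global-optimality lemma, since the two regimes $p_j\ge n_j$ and $\tau(p_j)>m'$ require slightly different arguments, and ensuring that Pataki's bound $m'$ enters the dimension count with the correct contribution from the unfactored blocks.
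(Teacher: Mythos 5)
Your architecture is the same as the paper's: specialize the conic first- and second-order conditions \eqref{eq:conditionsconic0} to \eqref{eq:bmconic}, prove a blockwise analogue of \Cref{thm:firstpart} (this is the paper's \Cref{thm:firstpartconic}), and then confine the cost of any spurious 2-critical point to a Minkowski sum of a determinantal set and a subspace (the paper's \Cref{thm:sufficientconic}). Your treatment of the two branches of the $\min$ is also essentially right; in fact it is slightly more than needed, since under the hypothesis the branch $\tau(p_j)>\tau(n_j)$ forces $p_j>n_j$ strictly, so $Y_j$ is automatically column rank deficient and the fallback ``$S_j(\lambda)Y_j=0$ with $Y_j$ of full row rank forces $S_j(\lambda)=0$'' is only needed in the borderline case $p_j=n_j$, which the hypothesis routes through the $m'$ branch anyway.

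The genuine gap is in the final dimension count, which as written does not prove the stated theorem. You place $C$ in (a determinantal variety on the factored blocks) $+$ (a subspace of dimension at most $m$); that sum has dimension at most $D-\tau(p_{\min})+m$ where $D=\dim(\SS^{\bf n}\times\RR^d)$, so it is a proper subset only when $\tau(p_{\min})>m$ --- a strictly stronger hypothesis than $\tau(p_{\min})>m'$, since $m'\le m-d$. The entire point of the theorem is to match Pataki's bound $m'$, and to get it you must spend two consequences of 1-criticality that you mention but never use in the count: first, $s(\lambda)=0$, i.e.\ the free-variable component of the slack vanishes, saving $d$ dimensions; second, the complementary slackness $\langle \overline{S}(\lambda),\overline{X}\rangle=0$ with both factors in $\SS^{\overline{\bf n}}_+$, which forces $\rank \overline{S}_j(\lambda)\le n_j-r_j$ on each unfactored block, where $r_j=\rank X_j$. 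Mere PSD-ness of $\overline{S}(\lambda)$, which is what your sketch invokes, saves nothing, because $\SS^{n}_+$ is full-dimensional; it is the rank complementarity that buys the codimension $\sum_{j>k}\tau(r_j)$. Taking the union over the finitely many feasible rank profiles $(r_{k+1},\dots,r_\ell)$ yields the paper's set $\overline{\mathcal V}$, and then
$\dim\bigl(\underline{\mathcal V}\times\overline{\mathcal V}\times\{0^d\}+\image\mathcal{A}^*\bigr)
\le D-\tau(p_{\min})+\max_{r}\bigl\{m-d-\textstyle\sum_{j>k}\tau(r_j)\bigr\}
= D-\tau(p_{\min})+m' < D$,
which is exactly where the definition of $m'$ enters. (Your phrase ``$\lambda$ is supported on the active constraints'' is vestigial from the inequality case of \Cref{sec:inequality_constrained_sdp_s}: in \eqref{eq:sdpconic} all scalar constraints are equalities, so no dimension is saved there.) You correctly flagged this bookkeeping as the main obstacle, but it is precisely the step separating the theorem from the trivial bound $\tau(p)>m$, so the proposal is incomplete without it.
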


\begin{example}[Inequalities]
  Consider the inequality constrained problem \eqref{eq:sdp}.
  We may view each of the $m_2$ inequalities as a PSD constraint on a $1{\times} 1$ matrix.
  So this is a special instance of \eqref{eq:sdpconic} with $k{=}1$, $\ell{=}m_2{+}1$, $d{=}0$, and $n_2{=}\dots{=}n_\ell {=} 1$.
  Note that $r_{i+1}{=}1$ when the $i$-th inequality constraint is inactive, and is zero otherwise.
  Hence
  $m' = m - \#(\text{inactive constrs}) = \#(\text{active constrs})$.
  This is consistent with the results from \Cref{sec:inequality_constrained_sdp_s}.
\end{example}

\begin{example}[Second-order cone]
  Let $\mathcal{Q}^n := \{x \!\in\! \RR^n: \|(x_2,\dots,x_n)\| \!\leq\! x_1 \}$
  be the second-order cone.
  Consider minimizing a linear cost on $\SS_+^{n_1} \!\times\! \mathcal{Q}^{n_2}$ subject to $m_1$ linear equalities.
  Apply the Burer-Monteiro factorization to the matrix in~$\SS_+^{n_1}$.
  We can embed $\mathcal{Q}^{n_2}$ inside $\SS^{n_2}_+$ by adding $\tau(n_2{-}1)$ new linear equalities, see \cite[pg.7]{Alizadeh2003}.
  So this is a special case of~\eqref{eq:bmconic} with $\ell {=} 2$, $k{=}1$, $d{=}0$, $m {=} m_1 {+} \tau(n_2{-}1)$.
  Given $x \!\in\! \mathcal{Q}^{n_2}$,
  the rank of the corresponding PSD matrix is $r_2{=}0$ if $x{=}0$,
  $r_2{=}n_2{-}1$ if $x$ lies in the boundary,
  and $r_2{=}n_2$ if $x$ lies in the interior.
  So \Cref{thm:mainconic} applies when $\tau(p_1) \!>\! m_1 {+} \tau(n_2{-}1) {-} \tau(r_2)$,
  where $r_2$ is the smallest feasible rank.
  We point out that embedding $\mathcal{Q}^{n_2}$ inside~$\SS_+^{n_2}$ is used for the analysis,
  but we do not need to do this in practice.
  The reason is that the embedding preserves critical points.
\end{example}

We also provide an explicit characterization of the costs $C$ for which spurious 2-critical points may exist.
These costs lie in the Minkowski sum of two algebraic sets,
which are closely related to the ones in \eqref{eq:varietyrank} and~\eqref{eq:varietyL}.

\begin{theorem}\label{thm:sufficientconic}
  If~\eqref{eq:bmconic} has a spurious 2-critical point,
  then $C$ lies in the algebraic set
  $\, \underline{\mathcal V} \!\times\! \overline{\mathcal V} \!\times\! \{0^d\}+ \image \mathcal{A}^*\subset \SS^{\bf n} \!\times\! \RR^d$, with
  \begin{gather*}
    \underline{\mathcal V} \;:=\; \bigcup_{j\in [k]: \,p_j {\leq} n_j}
    (\SS^{n_1}\!\times\! \cdots \!\times\! \SS^{n_{j-1}}\!\times\! \SS^{n_j}_{n_j-p_j} \!\times\! \SS^{n_{j+1}} \!\times\! \cdots \!\times\! \SS^{n_\ell})
    \;\subset\; \SS^{\underline{\bf n}},
    \\
    \overline{\mathcal V} \;:=\; \bigcup_{r_{k+1},\dots,r_\ell}
    (\SS^{n_{k+1}}_{n_{k+1}-r_{k+1}}\!\times \cdots \times \SS^{n_\ell}_{n_\ell-r_\ell})
    \;\subset\; \SS^{\overline{\bf n}},
  \end{gather*}
  where the last union is over the possible ranks $r_{k+1},\dots,r_\ell$ in~\eqref{eq:sdpconic}.
\end{theorem}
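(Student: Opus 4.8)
The plan is to mirror the proof of \Cref{thm:sufficient}, replacing the ordinary criticality conditions \eqref{eq:conditions} by the conic ones \eqref{eq:conditionsconic0}. Problem \eqref{eq:bmconic} is an instance of \eqref{eq:nlpconic} in which the conic variable is the unfactored block $\overline{X}\in\SS^{\overline{\bf n}}_+$ and the free variables are $(Y,x)$. Writing the slack $S(\lambda):=C-\mathcal{A}^*(\lambda)=(S_1,\dots,S_\ell,s_0)$, the Lagrangian is $L=\langle S(\lambda),(q(Y),\overline{X},x)\rangle+b^T\lambda-\langle \overline{S},\overline{X}\rangle$ with $\overline{S}$ in the dual cone $\mathcal{K}^*=\SS^{\overline{\bf n}}_+$. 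First I would specialize \eqref{eq:conditionconicfirst0}: the gradient in $Y_j$ gives $S_j Y_j=0$ for $j\in[k]$; the gradient in $\overline{X}$ identifies $\overline{S}$ with $(S_{k+1},\dots,S_\ell)$, so $S_j\in\SS^{n_j}_+$ for $j>k$; the gradient in $x$ gives $s_0=0$; and complementary slackness $\langle\overline{S},\overline{X}\rangle=0$ forces $S_j X_j=0$, hence $\rank S_j\leq n_j-r_j$, for $j>k$.

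Next I would establish the conic analogue of \Cref{thm:firstpart}\textit{(ii)}. Fixing $\overline{X}$, the Lagrangian is quadratic in $Y$ and linear in $x$, so its Hessian is the block quadratic form $U\mapsto\sum_{j=1}^k S_j\bullet U_j U_j^T$. If every $Y_j$ with $j\in[k]$ is column rank deficient, I pick $z_j\neq 0$ with $Y_j z_j=0$ and test \eqref{eq:conditionconicsecond0} with $U_j=x_j z_j^T$ and all other blocks zero; since $U_j Y_j^T=0$ the linearized constraint holds, and $S_j\bullet U_j U_j^T=\|z_j\|^2\,(x_j^T S_j x_j)\geq 0$ yields $S_j\in\SS^{n_j}_+$. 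Combined with the first-order facts $S_j\in\SS^{n_j}_+$ for $j>k$ and $s_0=0$, this makes $S(\lambda)$ feasible for the conic dual, and the relations $S_j\bullet Y_j Y_j^T=0$, $S_j\bullet X_j=0$ certify zero duality gap, so the point is globally optimal.

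Taking the contrapositive, a spurious 2-critical point must have some $Y_{j^*}$ with $j^*\in[k]$ of full column rank $p_{j^*}$; this is possible only when $p_{j^*}\leq n_{j^*}$, and then $S_{j^*}Y_{j^*}=0$ forces $\rank S_{j^*}\leq n_{j^*}-p_{j^*}$, placing $(S_1,\dots,S_k)$ in $\underline{\mathcal V}$. The first-order bounds $\rank S_j\leq n_j-r_j$ for $j>k$ put $(S_{k+1},\dots,S_\ell)$ in $\overline{\mathcal V}$, and $s_0=0$ handles the $\RR^d$ block. Therefore $S(\lambda)\in\underline{\mathcal V}\times\overline{\mathcal V}\times\{0^d\}$, and $C=S(\lambda)+\mathcal{A}^*(\lambda)$ lies in the claimed Minkowski sum.

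The main obstacle is the correct bookkeeping of the criticality conditions in the conic setting: separating the factored blocks (which contribute the annihilation $S_j Y_j=0$ and the second-order inequality) from the unfactored conic blocks (which contribute PSD-ness and complementary slackness $S_j X_j=0$), and matching the resulting rank constraints to the two unions defining $\underline{\mathcal V}$ and $\overline{\mathcal V}$. In particular one must keep track that full column rank of $Y_{j^*}$ requires $p_{j^*}\leq n_{j^*}$, which is exactly the index restriction $p_j\leq n_j$ in the union defining $\underline{\mathcal V}$.
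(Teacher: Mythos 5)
Your proposal is correct and takes essentially the same route as the paper: it specializes the conic criticality conditions \eqref{eq:conditionsconic0} to obtain \eqref{eq:conditionsconic}, proves the conic analogue of \Cref{thm:firstpart}\textit{(ii)} (the paper's \Cref{thm:firstpartconic}), and then argues by contrapositive that a spurious 2-critical point has some $Y_{j^*}$ of full column rank with $p_{j^*}\leq n_{j^*}$, so that $S(\lambda)\in\underline{\mathcal V}\times\overline{\mathcal V}\times\{0^d\}$ and $C=S(\lambda)+\mathcal{A}^*(\lambda)$ lies in the claimed set. The only difference is cosmetic: you spell out the kernel test direction $U_j=x_jz_j^T$, the dual-feasibility/complementary-slackness certificate, and the rank bounds $\rank S_j\leq n_j-r_j$ for $j>k$, details the paper compresses into the remark that \Cref{thm:firstpartconic} is ``analogous to \Cref{thm:firstpart}.''
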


\Cref{thm:mainconic} follows from \Cref{thm:sufficientconic} by counting dimensions.
Let $p_{\min}$ be the minimum of $p_1,\dots,p_k$,
ignoring the values with $p_j \!>\! n_j$.
Note that
\begin{gather*}
  \dim \underline{\mathcal V}
  = \sum_{j\leq k} \!\tau(n_j) - \tau(p_{\min}),
  \qquad
  \dim \overline{\mathcal V}
  = \max_{r_{k+1}\dots r_\ell} \sum_{j>k} \!\tau(n_j){-}\tau(r_j).
\end{gather*}
Let $D := \dim( \SS^{\bf n} \!\times\! \RR^d) = \tau(n_1){+}\cdots{+}\tau(n_\ell){+}d$.
If $\tau(p_{\min}) \!>\! m'$, then
\begin{gather*}
  \dim (\underline{\mathcal V}\!\times\! \overline{\mathcal V} \!\times\! \{0\} \!+\! \image \mathcal{A}^*)
  = m + \sum_{j\leq k} \!\tau(n_j) - \tau(p_{\min}) +
  \max_{r_{k+1}\dots r_\ell} \sum_{j>k} \!\tau(n_j){-}\tau(r_j)
  \\
  =\, D -\tau(p_{\min}) +
  \max_{r_{k+1},\dots,r_\ell} \bigl\{m\!-\!d\!-\!\sum_{j>k} \tau(r_j) \bigr\}
  \,=\, D -\tau(p_{\min}) + m' \;<\; D,
\end{gather*}
Hence $\underline{\mathcal V}\!\times\! \overline{\mathcal V} \!\times\! \{0\} \!+\! \image \mathcal{A}^*$ has measure zero.

We proceed to prove \Cref{thm:sufficientconic}.
We first derive the optimality conditions for~\eqref{eq:bmconic}.
This is a special instance of~\eqref{eq:nlpconic},
so we need to specialize~\eqref{eq:conditionsconic0}.
For $\lambda\!\in\!\RR^m$, consider the slack variable
$S(\lambda) := C {-} \mathcal{A}^{*}(\lambda) \in \SS^{\bf n}{\times}\RR^n$.
Let
$S_j(\lambda) \in \SS^{n_j}$
be the $j$-th component of~$S(\lambda)$.
Similarly define $\overline{S}(\lambda) \in \SS^{\overline{\bf n}}$ and $s(\lambda) \in \RR^{d}$.
The criticality conditions are:
\begin{subequations}\label{eq:conditionsconic}
\begin{gather}
  \label{eq:conditionconicfirst}
  (q(Y), \overline{X},x) \!\in\!\! \mathscr{X},
  \;\;\,
  \overline{S}(\lambda) \!\in\! \SS^{\overline{\bf n}}_+,
  \;\;\,
  \langle \overline{S}(\lambda), \overline{X}\rangle \!=\! 0,
  \;\;\,
  s(\lambda) \!=\! 0,
  \;\;\,
  S_j(\lambda)  Y_j \!=\! 0,
  \\
  \label{eq:conditionconicsecond}
  S_j(\lambda)\bullet U_j U_j^T \geq 0,
  \quad\forall\, U_j\!\in\! \RR^{n_j\times p_j} \text{ s.t. }
  \mathcal{A}_j(U_j Y_j^T) \!=\!0
  \quad(\text{for }j\!\in\! [k]).
\end{gather}
\end{subequations}

We now provide sufficient conditions for global optimality of critical points.

\begin{lemma} \label{thm:firstpartconic}
  Either of the following conditions imply global optimality:
  \begin{enumerate}[label=(\roman*)]
    \item
      $( Y,\overline{X},x)$ is 1-critical and $S_j(\lambda) \!\in\! \SS_+^{n_j}$ for $j \!\in\! [k]$.
    \item
      or $( Y,\overline{X},x)$ is 2-critical and $Y_j$ is column rank deficient for $j \!\in\! [k]$.
  \end{enumerate}
\end{lemma}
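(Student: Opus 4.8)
The plan is to mirror the proof structure of \Cref{thm:firstpart}, lifting it to the conic setting where the Burer-Monteiro factorization is applied only to the first $k$ blocks. The key observation is that \eqref{eq:bmconic} is a restriction of the convex problem \eqref{eq:sdpconic}: if I can exhibit a dual certificate, conic duality will force global optimality. So for part \textit{(i)}, I would first write down the conic dual of \eqref{eq:sdpconic}, namely $\max_\lambda \{ b^T\lambda : S(\lambda)\in \SS^{\bf n}_+\times \{0^d\},\ \lambda\in\RR^m\}$, noting that the free variable $x$ forces the slack component $s(\lambda)=0$. Given a $1$-critical $(Y,\overline{X},x)$ with multiplier $\lambda$, I set $X_j := Y_j Y_j^T$ for $j\in[k]$ and keep $\overline{X}, x$ as given, so that $(X_1,\dots,X_k,\overline{X},x)$ is primal feasible by \eqref{eq:conditionconicfirst}. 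I then verify the three optimality requirements: primal feasibility (immediate), dual feasibility, and complementary slackness $\langle S(\lambda),X\rangle=0$.

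The crux of part \textit{(i)} is assembling dual feasibility and complementary slackness from the hypothesis. Dual feasibility of $S(\lambda)$ requires $S_j(\lambda)\in\SS^{n_j}_+$ for every $j\in[\ell]$ and $s(\lambda)=0$: the conditions for $j\in[k]$ are exactly the hypothesis of \textit{(i)}, the conditions for $j>k$ are $\overline{S}(\lambda)\in\SS^{\overline{\bf n}}_+$ from \eqref{eq:conditionconicfirst}, and $s(\lambda)=0$ is also in \eqref{eq:conditionconicfirst}. For complementary slackness, $\langle S(\lambda),X\rangle = \sum_{j\leq k} S_j(\lambda)\bullet Y_j Y_j^T + \langle \overline{S}(\lambda),\overline{X}\rangle + s(\lambda)^T x$; the first sum vanishes termwise because $S_j(\lambda)Y_j=0$ implies $S_j(\lambda)\bullet Y_j Y_j^T = 0$, the middle term is zero by \eqref{eq:conditionconicfirst}, and the last term is zero since $s(\lambda)=0$. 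Weak duality then closes the gap.

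For part \textit{(ii)}, the strategy is to reduce to part \textit{(i)} by showing that rank deficiency upgrades the second-order condition into the positive semidefiniteness needed for dual feasibility. Fixing $j\in[k]$, I take any $x\in\RR^{n_j}$ and aim to show $x^T S_j(\lambda) x\geq 0$. Since $Y_j$ is column rank deficient, there is a nonzero $z\in\RR^{p_j}$ with $Y_j z=0$; setting $U_j:=x z^T$ gives $U_j Y_j^T = x z^T Y_j^T = 0$, so the linear constraint $\mathcal{A}_j(U_j Y_j^T)=0$ in \eqref{eq:conditionconicsecond} is automatically satisfied and the inequality $S_j(\lambda)\bullet U_j U_j^T\geq 0$ applies. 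A direct computation gives $S_j(\lambda)\bullet U_j U_j^T = \|z\|^2\,(x^T S_j(\lambda)x)$, whence $x^T S_j(\lambda)x\geq 0$, i.e., $S_j(\lambda)\in\SS^{n_j}_+$. Applying this for each $j\in[k]$ verifies the hypothesis of \textit{(i)}, and global optimality follows.

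I expect the only real subtlety to lie in the bookkeeping of the conic dual, specifically in correctly accounting for the free variable $x$ (whose dual cone is $\{0^d\}$, hence $s(\lambda)=0$) and in confirming that the second-order condition \eqref{eq:conditionconicsecond} is indexed per-block rather than jointly, so that the $U_j=xz^T$ construction stays within the feasible perturbation set of a single block. These are essentially identical to the unconstrained-block argument in \Cref{thm:firstpart}, so no genuinely new obstacle arises; the proof is a structured reassembly of the criticality conditions \eqref{eq:conditionsconic} against the conic weak duality inequality.
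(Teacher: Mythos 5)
Your proof is correct and follows exactly the route the paper intends: part \textit{(i)} matches the conic primal/dual certificate comparison (including the correct handling of the free variable via $s(\lambda)=0$), and part \textit{(ii)} is precisely the paper's kernel-vector argument with $U_j = x z^T$ reducing to \textit{(i)}. The paper only sketches this proof as ``analogous to \Cref{thm:firstpart},'' and your write-up fills in the details faithfully; the sole cosmetic issue is reusing the symbol $x$ in part \textit{(ii)} for a test vector when it already denotes the free variable of \eqref{eq:sdpconic}.
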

\begin{proof}
  The proof is analogous to \Cref{thm:firstpart}.
  For \textit{(i)} we compare~\eqref{eq:conditionconicfirst} with the primal/dual optimality conditions for \eqref{eq:sdpconic}.
  For \textit{(ii)} we use a vector $z_j \!\in\! \RR^{p_j}$ in the right kernel of $Y_j$ in order to show that $S_j(\lambda) \in \SS_+^{n_j}$.
\end{proof}

\begin{proof}[\proofmainconic]
  Let $( Y,\overline{X},x,\lambda)$ a spurious point satisfying~\eqref{eq:conditionsconic}.
  By \Cref{thm:firstpartconic}\textit{(ii)} we have tat $\rank  Y_j \!=\! p_j$ for some $j\!\in\![k]$.
  As $S_j(\lambda)  Y_j \!=\! 0$ then $ S_j(\lambda) \!\in\! \SS^{n_j}_{n_j-p_j}$.
  Let $(r_{k+1},\dots,r_\ell)$ be the ranks of $\overline{X}$.
  Since $\langle \overline{S}(\lambda), \overline{X}\rangle \!=\! 0$ and both lie in $\SS^{\overline{\bf n}}_+$,
  then $\overline{S}(\lambda) \subset \SS^{n_{k+1}}_{n_{k+1}-r_{k+1}}\!\!\times\! \cdots \!\times\! \SS^{n_\ell}_{n_\ell-r_\ell}$.
  Hence
  $S(\lambda)\in \underline{\mathcal V}\!\times\! \overline{\mathcal V}\!\times\!\{0\}$, as $s(\lambda)\!=\!0$.
  The result follows from
  $C = S(\lambda) \!+\! \mathcal{A}^*(\lambda)$.
  \addQED
\end{proof}

As illustrated next, \Cref{thm:sufficientconic} can be used even when~$C$ is not generic.

\begin{example}[Matrix sensing] \label{exmp:sensing2}
  We revisit the problem of sensing symmetric matrices from \Cref{exmp:sensing}.
  For $X\!\in\! \SS^n$, its nuclear norm satisfies:
  \begin{align*}
    \|X\|_* \quad=\quad
    \min_{Z} \;\; \id_n \bullet Z
    \;\;\text{ such that }\;\;
    Z\!+\!X\in \SS^n_+,\;\;
    Z\!-\!X\in \SS^n_+.
  \end{align*}
  Let $X_1 \!:=\! \frac{1}{2}(Z{+}X)$, $X_2 \!:=\! \frac{1}{2}(Z{-}X)$.
  We can rewrite problem~\eqref{eq:sensing} as follows:
  \begin{align*}
    \min_{X_1,X_2} \;\; \id_n {\bullet} X_1 \!+\! \id_n {\bullet} X_2
    \;\;\text{ such that }\;\;
    \mathcal{A}(X_1)\!-\!\mathcal{A}(X_2)\!=\!b,\;
    X_1\!\in\! \SS^n_+,\;
    X_2\!\in\! \SS^n_+.
  \end{align*}
  Consider the Burer-Monteiro method applied to both matrices $X_1,X_2$,
  so that $k\!=\!\ell\!=\!2$,
  using the same rank $p$ for both matrices.
  We will prove that there are no spurious 2-critical points
  when $\mathcal{A}:\SS^n\!\to\!\RR^m$ is generic and $\tau(p)\!>\!m$.
  By \Cref{thm:sufficientconic}, we need to show that
  \begin{align*}
    (\id_n,\,\id_n)
    \;\notin\;
    \underline{\mathcal V} \,+\,  (1,-1) {\otimes} \image \mathcal{A}^*,
    \quad\text{ where }\quad
    \underline{\mathcal V} \,:=\, \SS^n_{n-p} \!\times\! \SS^n \;\cup\; \SS^n \!\times\! \SS^n_{n-p}.
  \end{align*}
  It suffices to see that $\id_n \notin \SS^n_{n-p} \!+\! \image \mathcal{A}^*$.
  But this was shown in \Cref{thm:genericA}.
\end{example}

\subsection*{Acknowledgments}
The author thanks Nicolas Boumal, Ankur Moitra, Pablo Parrilo, and David Rosen for helpful discussions and comments.

\appendix
\section{Regularity with generic constraints}\label{sec:regularity}

In this section we prove \Cref{thm:licqgeneric}.
Our proof relies on Sard's theorem from differential geometry,
see e.g.,~\cite[\S2]{Milnor1997}.

\begin{theorem}[Sard]\label{thm:sard}
  Let $f: \RR^n \to \RR^m$ be a smooth map, with $n \!\geq\! m$.
  Let $v \!\in\! \RR^m$ be a generic point.
  Then $\rank(\nabla f(y)) \!=\! m$ for any $y \!\in\! f^{-1}(v)$.
\end{theorem}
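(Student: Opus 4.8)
The plan is to recognize this statement as the classical Sard theorem, recast through the definition of a regular value. Recall that a point $y$ is a \emph{critical point} of $f$ when $\rank(\nabla f(y)) < m$, and its image $f(y)$ is then called a \emph{critical value}; a point $v \in \RR^m$ that is not a critical value is a \emph{regular value}, which by definition means precisely that $\rank(\nabla f(y)) = m$ for every $y \in f^{-1}(v)$. Thus the conclusion of the theorem is exactly the assertion that a generic $v$ is a regular value, and the whole task reduces to showing that the critical values form a negligible set.

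With this reformulation, I would first invoke the standard form of Sard's theorem, as stated in \cite[\S2]{Milnor1997}: the set $C_f \subset \RR^m$ of critical values of $f$ has Lebesgue measure zero. The hypothesis $n \geq m$ is what makes this the nontrivial case of interest here (for $n < m$ the entire image $f(\RR^n)$ already has measure zero, and the conclusion about full-rank Jacobians would fail). Once the measure-zero statement is in hand, the theorem follows immediately: any $v$ lying outside $C_f$ is a regular value, so every preimage $y \in f^{-1}(v)$ satisfies $\rank(\nabla f(y)) = m$. Since \emph{generic} means \emph{outside a set of measure zero}, this is exactly the desired conclusion.

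If instead a self-contained proof of the underlying Sard theorem were required, I would proceed by induction on $n$, stratifying the critical set according to the order at which the successive derivatives of $f$ vanish, covering each stratum by small cubes, and using Taylor's theorem to bound the volume of the image of each cube; a Fubini argument then assembles these local estimates into the global measure-zero conclusion. I expect the main obstacle on that route to be the careful control of the images of the higher-order vanishing sets, which is precisely where the induction on dimension and the Taylor estimates carry the real weight. For the purposes of this appendix, however, citing the classical theorem suffices and no such reconstruction is needed.
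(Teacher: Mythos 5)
Your proposal is correct and matches the paper's approach exactly: the paper states this result as the classical Sard theorem, citing \cite[\S 2]{Milnor1997} without further proof, and your reformulation of the conclusion as ``a generic $v$ is a regular value,'' followed by invoking the measure-zero statement for critical values, is precisely the intended reading. One minor aside: for $n < m$ the stated conclusion would in fact hold vacuously rather than fail, since then $f(\RR^n)$ itself has measure zero and a generic $v$ has empty preimage $f^{-1}(v)$ --- but this side remark does not affect your argument.
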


\begin{proof}[\prooflicq]
  Fix a set of indices $I \!\subset\! [m]$, and let
  \begin{align*}
    \mathcal{M}_I := \{Y \in \RR^{n\times p}:
    A_i \bullet Y Y^T \!= b_i \text{ for } i \in I \}.
  \end{align*}
  We claim that \eqref{eq:licq} holds at all points on $\mathcal{M}_I$
  (i.e., the gradients are linearly independent).
  If this happens for each $I\!\subset\![m]$,
  then \eqref{eq:licq} also holds for the feasible set of~\eqref{eq:bm}.
  So it suffices to show the claim.

  We prove the claim under a more restrictive genericity setting.
  We assume that each $b_i \!\neq\! 0$ and that $A_i \!=\! \alpha_i \bar{A}_i$,
  where $\{\bar{A}_i\}$ are fixed matrices
  and $\{\alpha_i\}$ are generic scalars.
  Let
  \begin{align*}
    f_I : \RR^{n\times p} \to \RR^I,
    \qquad
    Y \,\mapsto\, \left( \bar{A}_i \bullet Y Y^T  : i \in I \right).
  \end{align*}
  The vector $v \!:=\! (b_i / \alpha_i : i \!\in\! I)$ is generic
  since $\{\alpha_i\}$ are generic.
  By \Cref{thm:sard}, $\nabla f_I(Y)$ is full rank for any $Y \!\in\! f_I^{-1}(v) = \mathcal{M}_I$.
  So \eqref{eq:licq} holds on~$\mathcal{M}_I$.
  \addQED
\end{proof}

\bibliographystyle{abbrv}
\bibliography{refs.bib}

\end{document}